\newenvironment{enumlist}{\begin{enumerate}[label=\upshape{(\arabic*)}]}{\end{enumerate}}
\title{Using Ramsey's Theorem Once}
\author{
Jeffry L. Hirst\thanks{Department of Mathematical Sciences, Appalachian State University,
Walker Hall, Boone, NC 28608. \textit{Email:}~\texttt{hirstjl@appstate.edu}}%
 \and %
Carl Mummert\thanks{Department of Mathematics, Marshall University, 1 John Marshall Drive, Huntington, WV 25755. \textit{Email:}~\texttt{mummertc@marshall.edu}}%
 }
\date{June 1, 2017}
\theoremstyle{plain}
\newtheorem{theorem}{Theorem}
\newtheorem{corollary}[theorem]{Corollary}  
\newtheorem{lemma}[theorem]{Lemma}
\theoremstyle{definition}
\newtheorem{defn}[theorem]{Definition}      
\newcommand{\nat}{\mathbb N}  
\newcommand{\rca}{{\sf RCA}_0}
\newcommand{\rcaw}{{\sf RCA}_0^\omega}
\newcommand{\gth}{\mathcal{T}}
\newcommand{\haw}{{\widehat{\mathsf{E}\text{-}\mathsf{HA}}}\strut^\omega_{\raise2pt\hbox{\scriptsize${\mathord{\upharpoonright}}$}}}
\newcommand{\qfac}{{\sf QF}\text{-}{\sf AC}^{1,0}}
\newcommand{\rt}{{\sf{RT}}}
\newcommand{\probP}{{\sf P}{\rm :}\forall x (p_1 (x) \to \exists y \, p_2 (x,y))}
\newcommand{\probQu}{{\sf Q}{\rm :}\forall u (q_1 (u) \to \exists v \, q_2 (u,v))}
\newcommand{\wlt}{\le_{W}}
\begin{document}

\maketitle

\begin{abstract}
We show that $\rt (2,4)$ cannot be proved with one typical application of $\rt (2,2)$ in an intuitionistic
extension of $\rca$ to higher types, but that this does not remain true when the law of the excluded
middle is added.  The argument uses Kohlenbach's axiomatization of higher
order reverse mathematics, results related to modified reducibility, and a formalization of Weihrauch
reducibility.

\noindent \textbf{Keywords:}  Ramsey; Weihrauch; uniform reduction; higher order; reverse mathematics; proof mining\\
\textbf{MSC Subject Class (2000):} 03B30; 03F35; 03F50; 03D30; 03F60
 \end{abstract}
 
 \vskip -5in
 \noindent
 This is a preprint of an article that appeared in the Archive for Mathematical Logic, DOI: 10.1007/s00153-019-00664-z
 
 \noindent
 An author approved manuscript incorporating the referee's suggested changes, and a copy of the corrigendum on the last page can be found on the authors' websites.
 \vskip 4.75in
 
 One of the questions motivating the exploration of uniform reductions in the article
 of Dorais, Dzhafarov, Hirst, Mileti, and Shafer~\cite{doraisetal} was:
 Is it possible to prove Ramsey's theorem for pairs and four colors from a single use of
 Ramsey's theorem for pairs and two colors?  Not surprisingly, the answer depends
 on the base system chosen, as shown in \S\ref{section3} below.  Our approach utilizes a
 formalization of Weihrauch reducibility described by Hirst at Dagstuhl Seminar 15392~\cite{dagstuhl}, based on higher order reverse mathematics as axiomatized by
 Kohlenbach~\cite{k2001}.
 { This choice of formalization, along with the choice of different base systems, yields results that differ from
 those in recent closely related work of Kuyper~\cite{rk}.} { We discuss these differences at the end of Section~\ref{section2}.}

\section{Formal Weihrauch reduction}\label{section1}

The counting of theorem applications in later sections relies in part on the close
connection between proofs in some systems of arithmetic and Weihrauch reduction.
This relationship is also central to the arguments of Kuyper~\cite{rk}
Rather than formalizing Weihrauch reduction by means of indices (as in~\cite{rk}),
we work in extensions of reverse mathematics axiom systems \cite{simpson} to higher types,
first formulated by Kohlenbach~\cite{k2001}.  These systems have variables
for numbers (type $0$ objects), functions from numbers to numbers (type $1$ objects
encoding sets of numbers), and for functions from type $1$ functions to numbers,
type $1$ functions to type $1$ functions, and so on.  In Kohlenbach's terminology,
$\rcaw$ consists of $\haw$ plus the law of the excluded middle and $\qfac$, a restricted
choice scheme.  The system $\haw$ is an axiomatization of intuitionistic Heyting arithmetic
in all finite types, with restricted induction and primitive recursion.  For full details, see Kohlenbach~\cite{kbook}*{\S3.4}.  The choice scheme $\qfac$ asserts
\[
\forall x \exists n A(x,n) \to \exists \varphi \forall x A(x,\varphi(x))
\]
for $A$ quantifier free, where $x$ is a set variable, $n$ is a number variable, and $\varphi$ is a variable
for functions mapping sets to numbers.  To make the typography more compact, we will use letters
between $i$ and $n$ to denote number variables, letters following $s$ in the alphabet as set variables,
and greek letters for various functionals.  We also use $i\rcaw$ to denote the intuitionistic system
arising from omitting the law of the excluded middle from $\rcaw$.  A concise outline of the axioms
for $\rcaw$ can be found in the article of Hirst and Mummert~\cite{hirstmummert}.

Weihrauch reducibility is a computability theoretic approach to measuring relative uniform strength.
See Brattka and Gherardi~\cite{bg2011b} for an extensive survey.  We adopt the notion of
reduction of problems, as used by Dorais~\cite{dorais} and Dorais {\it et al.}~\cite{doraisetal}.  A problem
$\sf{P}$ is a formula of the form $\forall x (p_1 (x) \to \exists y \, p_2 (x,y))$, asserting that whenever
$x$ is an instance of the problem then there is a solution $y$ for $x$.  Suppose
$\probP$ and $\probQu$ are problems.  We say ${\sf Q}$ is {\em Weihrauch reducible} to ${\sf P}$, and write ${\sf Q} \wlt {\sf P}$,
if there are computable functions $\varphi$ and $\psi$ such that the following hold:
\begin{list}{$\bullet$}{}
\item  If $u$ is an instance of $\sf Q$ then $\varphi (u)$ is an instance of $\sf P$, that is:
\[q_1 (u) \to p_1 (\varphi (u))\]
\item  If $y$ is a solution of $\varphi(u )$, then $\psi (u , y )$ is a solution of $\sf Q$, that is:
\[p_2 ( \varphi (u)  , y )\to q_2 ( u , \psi (u , y ))\]
\end{list}
Consequently, in the language of $\rcaw$ we can formalize ${\sf Q}\wlt {\sf P}$ as:
\[
\exists \varphi \exists \psi 
\forall u \left( q_1 (u) \to \left( p_1(\varphi (u)) \land
\forall y [ p_2 (\varphi (u) , y) \to q_2 (u, \psi (u,y))]
\right)\right)
\]
We will be working in subsystems of higher order reverse mathematics, so we use ${\sf Q} \wlt {\sf P}$ as an
abbreviation for the formula above, despite the fact that the leading quantifiers in the formula
are not explicitly restricted to computable functionals.  When working in $i\rcaw$, for many choices of $\sf Q$ and $\sf P$ this
is a faithful translation, as shown by Corollary~\ref{4F}.  However, in the classical setting, $\rcaw \vdash {\sf Q} \wlt {\sf P}$
may not imply ${\sf Q} \wlt {\sf P}$, as shown by the example following Corollary~\ref{4F}.

\section{Counting theorem applications}\label{section2}

In this section, we show that formalized Weihrauch reducibility is closely related to the structure of some intuitionistic proofs.

\begin{defn}\label{B1}
Suppose $\gth$ is a theory and $\probP$ and $\probQu$ are problems.
We say $\gth$ {\sl proves} $\sf Q$ {\sl with one typical use of} $\sf P$ if the following two sentences hold:
\begin{enumlist}
\item \label{B1A} For a variable $u$ there is a term $x_u$ such that using only axioms of $\gth$ and the assumption $q_1 (u)$,
and with no applications of generalization to $u$ or any variables appearing free in $x_u$, there is a deduction
of $p_1(x_u)$.
\item \label{B1B}  For a previously unused constant symbol $y_0$, there is a term $v_{x_u, y_0}$ such that using only axioms of $\gth$ and the
assumption $ p_2 (x_u,y_0)$, and with no applications of generalization to $u$ or any
variable appearing free in $x_u$ or $v_{x_u, y_0 }$, there is a deduction of $q_2 (u, v_{x_u , y_0} )$.
\end{enumlist}
\end{defn}


Informally, this definition says that given an instance of the problem $\sf{Q}$, there is an instance $x_u$ of the problem $\sf P$ such
that if there is a solution $y_0$ to $x_u$ then there is a solution $v_{x_u , y_0}$ to $\sf Q$.  The restrictions on generalization insure the
validity of applications of the deduction theorem in the proof of the following lemma.

\begin{lemma}\label{B2}
Suppose $\gth$ is a theory that includes intuitionistic predicate calculus, 
$\probP$ and $\probQu$ are problems, and $\gth$ proves $\sf Q$ with
one typical use of $\sf P$.  Then $\gth$ proves:
\[
\forall u \exists x \forall y \exists v (q_1(u)\to(p_1(x) \land (p_2(x,y) \to q_2(u,v))))
\]
\end{lemma}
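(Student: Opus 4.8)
The plan is to assemble the two deductions supplied by Definition~\ref{B1} into one proof: discharge their hypotheses using the deduction theorem for intuitionistic predicate calculus, splice the results together with a single $\land$-introduction, trade the auxiliary constant $y_0$ for a fresh variable, and then prefix the quantifiers $\exists v$, $\forall y$, $\exists x$, $\forall u$, in that order, by $\exists$-introduction and generalization.

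In detail, clause~\ref{B1B} furnishes a term $v_{x_u,y_0}$ and a deduction of $q_2(u,v_{x_u,y_0})$ from the axioms of $\gth$ together with the hypothesis $p_2(x_u,y_0)$ in which generalization is applied to no variable occurring free in $x_u$, hence to no variable occurring free in $p_2(x_u,y_0)$, since the constant $y_0$ contributes none. So the deduction theorem applies and yields $\gth \vdash p_2(x_u,y_0)\to q_2(u,v_{x_u,y_0})$. Appending a copy of this deduction, followed by one $\land$-introduction, to the deduction of $p_1(x_u)$ from $\gth\cup\{q_1(u)\}$ provided by clause~\ref{B1A} produces a deduction of $p_1(x_u)\land\bigl(p_2(x_u,y_0)\to q_2(u,v_{x_u,y_0})\bigr)$ from $\gth\cup\{q_1(u)\}$ in which generalization is still applied to no variable free in $x_u$, and in particular never to $u$. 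A second appeal to the deduction theorem therefore discharges $q_1(u)$ and gives
\[
\gth \vdash q_1(u)\to\Bigl(p_1(x_u)\land\bigl(p_2(x_u,y_0)\to q_2(u,v_{x_u,y_0})\bigr)\Bigr).
\]

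Because $y_0$ is a constant occurring neither in the axioms of $\gth$ nor in $x_u$, $p_1$, $p_2$, $q_1$, or $q_2$, I would replace it throughout this proof by a fresh variable $y$, obtaining $\gth\vdash q_1(u)\to(p_1(x_u)\land(p_2(x_u,y)\to q_2(u,v_{x_u,y})))$, where $v_{x_u,y}$ is $v_{x_u,y_0}$ with $y_0$ rewritten as $y$. Since $v_{x_u,y}$ is a term, $\exists$-introduction gives $\gth\vdash\exists v\,[q_1(u)\to(p_1(x_u)\land(p_2(x_u,y)\to q_2(u,v)))]$; every hypothesis having been discharged, $y$ occurs free in no open assumption and, being fresh, not in $x_u$, so generalization on $y$ is licensed and produces the $\forall y$ form; one more $\exists$-introduction replaces the term $x_u$ by the variable $x$ (permissible, as the bound variables $y$ and $v$ may be taken not to occur in $x_u$); and, since $u$ occurs free in no open assumption, a final generalization prefixes $\forall u$, giving exactly the sentence in the statement. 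The only steps that call for care are the two uses of the deduction theorem, whose generalization side conditions hold precisely because of the restrictions written into Definition~\ref{B1}; I expect checking those side conditions to be the main---and quite modest---obstacle, everything else being routine application of the introduction and generalization rules of intuitionistic predicate calculus.
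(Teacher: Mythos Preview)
Your proposal is correct and follows essentially the same route as the paper: discharge $p_2(x_u,y_0)$ and then $q_1(u)$ via the deduction theorem (whose side conditions are exactly the generalization restrictions built into Definition~\ref{B1}), combine with $\land$-introduction, and then alternate $\exists$-introduction and generalization to prefix $\exists v\,\forall y\,\exists x\,\forall u$. You are somewhat more explicit than the paper about replacing the constant $y_0$ by a fresh variable before generalizing on $y$, which the paper leaves implicit in its final sentence.
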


\begin{proof}
Given a proof of $\sf Q$ in $\gth$ with one typical use of $\sf P$, build a new proof as follows.
Assume $q_1 (u)$ as a hypothesis and, applying sentence~\ref{B1A} of Definition~\ref{B1},
emulate the given proof to construct a term $x_u$ with $p_1(x_u )$.
Let $y_0$ be a new constant symbol and
assume $p_2(x_u,y_0)$ as a hypothesis.
By sentence~\ref{B1B} of Definition~\ref{B1}, we can find a term $v_{x_u,y_0}$ and prove $q_2(u,v_{x_u , y_0} )$.
One application of the deduction theorem
yields $p_2(x_u, y_0) \to q_2 (u,v_{x_u , y_0})$.
By $\land$-introduction~\cite{kleene}*{\S19, Ax. 3} followed by the deduction theorem, we have:
\[
q_1(u) \to (p_1(x_u) \land (p_2(x_u , y_0 ) \to q_2 (u , v_{x_u, y_0} ))).
\]
Note that $x_u$ depends only on $u$ and $v_{x_u, y_0}$ depends only on $y_0$ and $x_u$.  Alternating
applications of $\exists$-introduction~\cite{kleene}*{\S{32}, fla.~68} and $\forall$-introduction~\cite{kleene}*{\S{32}, fla.~64} yield
\[
\forall u \exists x \forall y \exists v (q_1(u)\to(p_1(x) \land (p_2(x,y) \to q_2(u,v))))
\]
as desired.
\end{proof}

A formula is {\em $\exists$-free} if it is built from prime (that is, atomic) formulas using only universal quantification
and the connectives $\land$ and $\to$.  Here, the symbol $\bot$ is considered prime, and $\neg A$ is an abbreviation
of $A \to \bot$, so $\exists$-free formulas may include both $\bot$ and $\neg$.  Troelstra's~\cite{troelstra} collection $\Gamma_1$ consists
of those formulas defined inductively by the following:
\begin{list}{$\bullet$}{}
\item  All prime formulas are elements of $\Gamma_1$.
\item  If $A$ and $B$ are in $\Gamma_1$, then so are $A \land B$, $A \lor B$,
$\forall x A$, and $\exists x A$.
\item  If $A$ is $\exists$-free and $B$ is in $\Gamma_1$ then $\exists x A \to B$ is in $\Gamma_1$,
where $\exists x$ may represent a block of existential quantifiers.
\end{list}

\begin{theorem}\label{B4}
Suppose $\probP$ and $\probQu$ are problems and
the formula
$q_1(u)\to(p_1(x) \land [p_2(x,y) \to q_2(u,v)])$, abbreviated as $R(x,y,u,v)$, is in $\Gamma_1$.
Then $i\rcaw\vdash \forall u \exists x \forall y \exists v R(x,y,u,v)$ if and only if
$i\rcaw \vdash {\sf Q} \wlt {\sf P}$.
\end{theorem}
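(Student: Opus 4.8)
The plan is to prove the two implications separately. The direction from ${\sf Q}\wlt{\sf P}$ to $\forall u\exists x\forall y\exists v\,R(x,y,u,v)$ is pure intuitionistic predicate logic, and I would do it first. Given functionals $\varphi,\psi$ witnessing ${\sf Q}\wlt{\sf P}$, fix $u$, then fix $y$, assume $q_1(u)$, and read off from the matrix of ${\sf Q}\wlt{\sf P}$ both $p_1(\varphi(u))$ and --- after instantiating its inner universal quantifier at $y$ --- the implication $p_2(\varphi(u),y)\to q_2(u,\psi(u,y))$; conjoining these and discharging $q_1(u)$ gives $R(\varphi(u),y,u,\psi(u,y))$, and $\exists v$-, $\forall y$-, $\exists x$-, and $\forall u$-introduction (witnessing $x$ by $\varphi(u)$ and $v$ by $\psi(u,y)$) produces $\forall u\exists x\forall y\exists v\,R$.

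For the converse I would invoke the modified realizability interpretation, using two standard facts. The first is soundness of modified realizability for $i\rcaw$: from an $i\rcaw$-proof of a sentence $A$ one extracts closed terms $\underline t$ of the language with $i\rcaw \vdash \underline t \mathbin{\mathbf{mr}} A$. The second is the lemma that every $A \in \Gamma_1$ is self-realizing, $i\rcaw \vdash (\underline x \mathbin{\mathbf{mr}} A) \to A$, proved by induction on the generation of $\Gamma_1$, with the only nontrivial clause being $\exists \vec w\, B \to C$ for $B$ $\exists$-free, where one uses that an $\exists$-free formula has trivial $\mathbf{mr}$-translate. Now assume $i\rcaw \vdash \forall u\exists x\forall y\exists v\, R$. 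Since $R \in \Gamma_1$ and $\Gamma_1$ is closed under $\exists$ and $\forall$, the sentence $\forall u\exists x\forall y\exists v\, R$ is in $\Gamma_1$; apply soundness to obtain a closed realizer $\underline t$. Unwinding $\underline t \mathbin{\mathbf{mr}}(\forall u\exists x\forall y\exists v\, R)$ through the quantifier prefix, the witness-components of $\underline t$ give closed terms which I would name $\varphi(u)$ and $\psi(u,y)$, together with a further term that modified-realizes $R(\varphi(u),y,u,\psi(u,y))$ for all $u,y$; by the self-realizing lemma applied to $R \in \Gamma_1$, $i\rcaw \vdash \forall u\,\forall y\, R(\varphi(u),y,u,\psi(u,y))$. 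A short intuitionistic rearrangement --- pulling $\forall y$ inward past $q_1(u)$ and $p_1(\varphi(u))$, neither of which mentions $y$ --- turns this into $\forall u\,(q_1(u)\to(p_1(\varphi(u))\land\forall y[p_2(\varphi(u),y)\to q_2(u,\psi(u,y))]))$, and $\exists$-introducing $\varphi$ and $\psi$ gives exactly $i\rcaw \vdash {\sf Q}\wlt{\sf P}$.

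I expect the main work to lie in pinning down the soundness of modified realizability for $i\rcaw$ precisely as Kohlenbach axiomatizes it --- in particular, checking that the choice scheme $\qfac$ is realized (routine, since it is a choice instance and modified realizability validates choice by composing realizers) and that the extracted realizers stay within the restricted term language of $\haw$, so that the restricted induction and primitive recursion are matched. The remaining ingredients --- the inductive proof of the self-realizing lemma for $\Gamma_1$ and the elementary quantifier manipulations --- are bookkeeping; I would cite Troelstra and Kohlenbach's book for the interpretation and the $\Gamma_1$ lemma and adapt their statements to $i\rcaw$.
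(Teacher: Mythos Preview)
Your proposal is correct and follows essentially the same approach as the paper. For the easy direction you give the same intuitionistic quantifier manipulations; for the hard direction the paper invokes Lemma~3.9 of Hirst--Mummert \cite{hirstmummert} twice (once per existential), but then explicitly notes that this lemma rests on exactly the two ingredients you name --- soundness of modified realizability over $i\rcaw$ (Kohlenbach~\cite{kbook}, Theorem~5.8; Troelstra~\cite{troelstra}, Theorem~3.4.5) and the $\Gamma_1$ self-realizing (``conversion'') lemma (Kohlenbach~\cite{kbook}, Lemma~5.20; Troelstra~\cite{troelstra}, Lemma~3.6.5) --- so your direct unwinding of a single realizer for the full prenex sentence is the same argument with the black box opened.
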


\begin{proof}
To prove the implication from left to right,
suppose $\sf P$, $\sf Q$, and $R$ are as hypothesized, and $i\rcaw \vdash \forall u \exists x \forall y \exists v R(x,y,u,v)$.
The proof of Lemma~3.9 of Hirst and Mummert~\cite{hirstmummert} also holds for $i\rcaw$, so by two applications
of that lemma, there are terms $x_u$ and $v_{x_u , y}$ such that $i\rcaw \vdash \forall u \forall y R(x_u , y ,u , v_{x_u ,y} )$.
$i\rcaw$ proves existence of functionals $\varphi(u)=x_u$ and $\psi(u,y)=v_{x_u, y}$, so $i\rcaw$ proves:
\[
\forall u \forall y (q_1(u)\to(p_1(\varphi(u)) \land [p_2(\varphi(u),y) \to q_2(u,\psi(u,y))]))
\]
which is equivalent to ${\sf Q} \wlt {\sf P}$ by intuitionistic predicate calculus via~\cite{kleene}*{\S35, fla.~95},
\cite{kleene}*{\S35, fla.~89}, and $\exists$-introduction~\cite{kleene}*{\S32, fla.~68}.

Note that the proof of Lemma~3.9 of~\cite{hirstmummert} is based on versions of the soundness theorem for
modified realizability, which appears as Theorem~5.8 of Kohlenbach~\cite{kbook} and Theorem~3.4.5 of
Troelstra~\cite{troelstra}, and conversion lemmas for modified reducibility, Lemma~5.20 of Kohlenbach~\cite{kbook}
and Lemma~3.6.5 of~Troelstra~\cite{troelstra}.  The conversion lemmas are restricted to formulas in $\Gamma_1$,
necessitating the inclusion of this restriction as a hypothesis for this argument.

To prove the converse, suppose $i\rcaw\vdash {\sf Q} \wlt {\sf P}$.  
Thus, by our formalization adopted in section \S\ref{section1}, $i\rcaw$
proves the existence of functionals $\varphi$ and $\psi$ satisfying
\[
\forall u(q_1(u) \to
(p_1(\varphi (u))\land
\forall y [p_2(\varphi(u),y) \to q_2 (u, \psi (u, y))]
)
)
\]
By intuitionistic predicate calculus~\cite{kleene}*{\S{35}, fla.~89} and~\cite{kleene}*{\S35, fla.~95}, we can
move the universal quantifier on $y$ to the front of the formula.
Applying appropriate quantifier elimination followed by quantifier introduction yields
$\forall u \exists x \forall y \exists v R(x,y,u,v)$.
\end{proof}

As a corollary, we can show a close relationship between intuitionistic proofs and formal Weihrauch reducibility in
intuitionistic systems.

\begin{corollary}\label{B5}
Suppose $\sf P$, $\sf Q$ and $R$ satisfy the hypotheses in Theorem~\ref{B4}.
Then $i\rcaw$ proves $\sf Q$ with one typical use of $\sf P$ if and only if $i\rcaw \vdash {\sf Q} \wlt {\sf P}$.
\end{corollary}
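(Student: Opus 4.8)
The plan is to obtain the corollary by combining Lemma~\ref{B2}, Theorem~\ref{B4}, and a second look at the term extraction carried out inside the proof of Theorem~\ref{B4}; both directions pass through the intermediate sentence $\forall u\exists x\forall y\exists v\,R(x,y,u,v)$. For the forward implication, suppose $i\rcaw$ proves $\sf Q$ with one typical use of $\sf P$. Since $i\rcaw$ is an intuitionistic system it includes intuitionistic predicate calculus, so Lemma~\ref{B2} applies with $\gth = i\rcaw$ and yields $i\rcaw \vdash \forall u\exists x\forall y\exists v\,R(x,y,u,v)$. Because $\sf P$, $\sf Q$, and $R$ satisfy the hypotheses of Theorem~\ref{B4} --- in particular $R\in\Gamma_1$ --- that theorem gives $i\rcaw\vdash{\sf Q}\wlt{\sf P}$, as desired.

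For the converse, suppose $i\rcaw\vdash{\sf Q}\wlt{\sf P}$. By Theorem~\ref{B4} this yields $i\rcaw\vdash\forall u\exists x\forall y\exists v\,R(x,y,u,v)$, and I would then reuse the term extraction from the proof of Theorem~\ref{B4}: two applications of Lemma~3.9 of~\cite{hirstmummert} produce a term $x_u$ with no free variable other than $u$ and a term $v_{x_u,y}$ with no free variables other than $u$ and $y$ such that $i\rcaw\vdash\forall u\forall y\,R(x_u,y,u,v_{x_u,y})$. It remains to repackage this theorem into the two deductions demanded by Definition~\ref{B1}. Starting from $\forall u\forall y\,R(x_u,y,u,v_{x_u,y})$, universal specialization that leaves $u$ free --- using only a specialization axiom and modus ponens, hence with no application of generalization --- together with specialization of $y$ (by a previously unused constant $y_0$ where sentence~\ref{B1B} calls for one) gives the formula $q_1(u)\to(p_1(x_u)\land[p_2(x_u,y_0)\to q_2(u,v_{x_u,y_0})])$ that $R$ abbreviates; from this one assumes $q_1(u)$ and recovers $p_1(x_u)$ by one $\to$-elimination and one $\land$-elimination, witnessing sentence~\ref{B1A}, and --- the hypothesis $q_1(u)$ still being in force, exactly as in the proof of Lemma~\ref{B2} --- one assumes $p_2(x_u,y_0)$ and recovers $q_2(u,v_{x_u,y_0})$ by two further $\to$-eliminations and an $\land$-elimination, witnessing sentence~\ref{B1B}. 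No rule of generalization is applied anywhere in these deductions, and the variables free in $x_u$ and $v_{x_u,y_0}$ lie among $u$ and $y_0$, so the restrictions of Definition~\ref{B1} are respected.

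I expect the only real work to lie in the converse, and specifically in the bookkeeping around it: one must confirm that the realizing terms delivered by Lemma~3.9 have their free variables confined to $u$, respectively $u$ and $y$, so that the side conditions on generalization in Definition~\ref{B1} are automatically met, and one must track carefully which hypotheses --- $q_1(u)$ and $p_2(x_u,y_0)$ --- are in scope at each step of the repackaging, since the conclusion $q_2(u,v_{x_u,y_0})$ genuinely needs $q_1(u)$. This is also precisely the place where the $\Gamma_1$ hypothesis inherited from Theorem~\ref{B4} is indispensable, as it is what licenses the term extraction in the first place.
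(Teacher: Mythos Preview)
Your forward direction is identical to the paper's: Lemma~\ref{B2} followed by Theorem~\ref{B4}.

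For the converse you take a slightly different route. The paper argues directly from the hypothesis $i\rcaw\vdash{\sf Q}\wlt{\sf P}$: unpacking the existential quantifiers on $\varphi$ and $\psi$, it simply uses $\varphi(u)$ as the term required by sentence~\ref{B1A} of Definition~\ref{B1} and $\psi(u,y_0)$ as the term required by sentence~\ref{B1B}, without passing back through the intermediate $\forall u\exists x\forall y\exists v\,R$ or explicitly invoking Lemma~3.9 of~\cite{hirstmummert}. Your detour through Theorem~\ref{B4} and the term extraction is longer but buys something: Definition~\ref{B1} literally asks for \emph{terms} $x_u$ and $v_{x_u,y_0}$, and your use of Lemma~3.9 makes explicit where those terms come from and why their free variables are confined to $u$ and $y_0$. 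The paper's version leaves that step implicit in treating the existentially quantified $\varphi,\psi$ as if they were already closed terms. So the two arguments are morally the same, with yours being the more scrupulous reading of Definition~\ref{B1}.

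Your observation that $q_1(u)$ must remain an open hypothesis when deriving $q_2(u,v_{x_u,y_0})$ is correct and worth flagging; the paper's proof needs it too (since the Weihrauch formula is guarded by $q_1(u)$), and the informal gloss following Definition~\ref{B1} supports reading sentence~\ref{B1B} as continuing under the assumption $q_1(u)$, even though the formal clause mentions only $p_2(x_u,y_0)$.
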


\begin{proof}
The forward implication follows immediately from Lemma~\ref{B2} and Theorem~\ref{B4}.  To prove the converse,
suppose $i\rcaw$ proves the existence of functions $\varphi$ and $\psi$ witnessing ${\sf Q} \wlt {\sf P}$.
Then $\varphi (u)$ satisfies sentence~\ref{B1A} of Definition~\ref{B1}, so $p_1 (\varphi(u))$.
Assume the single use of $\sf P$ given by $p_2(\varphi(u) , y_0)$.
Because ${\sf Q} \wlt {\sf P}$, we have $q_2 (u, \psi (u, y_0 ))$, completing a proof satisfying sentence~\ref{B1B}
of Definition~\ref{B1}.
\end{proof}

Theorem~\ref{B4} also allows us to show that formal Weihrauch reducibility proved in $i\rcaw$ is often a faithful
representation of actual Weihrauch reducibility.

\begin{corollary}\label{4F}
Suppose $\sf P$, $\sf Q$ and $R$ satisfy the hypotheses in Theorem~\ref{B4}.
If $i\rcaw \vdash {\sf Q} \wlt {\sf P}$, then ${\sf Q} \wlt {\sf P}$.
\end{corollary}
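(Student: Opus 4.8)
The plan is to read genuine computable reductions out of the proof of Theorem~\ref{B4}. Assume $i\rcaw \vdash {\sf Q}\wlt{\sf P}$. Since $\sf P$, $\sf Q$, and $R$ satisfy the hypotheses of Theorem~\ref{B4}, that theorem gives $i\rcaw \vdash \forall u \exists x \forall y \exists v\, R(x,y,u,v)$. The first thing I would do is re-examine the left-to-right argument of Theorem~\ref{B4}: it does not merely prove this sentence, it produces explicit witnessing terms. Concretely, two applications of the $i\rcaw$ version of Lemma~3.9 of Hirst and Mummert~\cite{hirstmummert} --- which rests on the soundness theorem for modified realizability together with the $\Gamma_1$ conversion lemma, and hence uses the hypothesis that $R$ is in $\Gamma_1$ --- yield terms $x_u$ and $v_{x_u,y}$ with $i\rcaw \vdash \forall u \forall y\, R(x_u,y,u,v_{x_u,y})$.

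The key observation is that $x_u$ and $v_{x_u,y}$ are terms in the term language of $\haw$, which contains only the combinators and the restricted recursor, so every such term denotes a computable functional. Hence the functionals $\varphi$ defined by $\varphi(u)=x_u$ and $\psi$ defined by $\psi(u,y)=v_{x_u,y}$ are computable.

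It then remains to check that these computable $\varphi$ and $\psi$ genuinely witness ${\sf Q}\wlt{\sf P}$ in the sense of \S\ref{section1}, i.e. that $R(\varphi(u),y,u,\psi(u,y))$ holds for all $u$ and $y$. This is a matter of soundness: $i\rcaw$ is contained in $\rcaw$, and $\rcaw$ is satisfied by the full type structure over $\omega$, in which the arithmetical formulas $p_1,p_2,q_1,q_2$ carry their intended meaning and $\varphi,\psi$ name genuine functions. Since $i\rcaw \vdash \forall u \forall y\, R(x_u,y,u,v_{x_u,y})$, this sentence is true in that model, which is precisely the assertion that $\varphi$ and $\psi$ satisfy the two clauses in the definition of Weihrauch reducibility. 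Therefore ${\sf Q}\wlt{\sf P}$.

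The step I expect to require the most care is the claim that the extracted terms are computable: one must confirm that the realizability extraction underlying Lemma~3.9 of~\cite{hirstmummert} stays inside the term system of $\haw$ rather than passing to a proper extension, and then recall that this system contains only computable --- indeed primitive recursive --- functionals of finite type. Once that is granted, the rest is routine: Theorem~\ref{B4} supplies the provability, term extraction supplies the functionals, and soundness of $i\rcaw$ in the standard model supplies their correctness.
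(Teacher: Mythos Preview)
Your proposal is correct and follows essentially the same route as the paper: apply Theorem~\ref{B4} to obtain $i\rcaw \vdash \forall u \exists x \forall y \exists v\, R(x,y,u,v)$, invoke the term-extraction step from the proof of Theorem~\ref{B4} (Lemma~3.9 of~\cite{hirstmummert}) to get closed terms $x_u$ and $v_{x_u,y}$, and then pass to the full type structure over $\omega$ to conclude that these terms denote computable functionals witnessing the reduction. Your write-up is in fact more explicit than the paper's about why the extracted terms are computable and about the soundness step, but the argument is the same.
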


\begin{proof}
For $\sf P$, $\sf Q$ and $R$ as hypothesized, if $i\rcaw \vdash {\sf Q} \wlt {\sf P}$ then by Theorem~\ref{B4},
$i\rcaw \vdash \forall u \exists x \forall y \exists v R (x,y,u,v)$.  As in the proof of Theorem~\ref{B4},
this means there are terms $x_u$ and $v_{x_u, y}$ in the language of $i\rcaw$ such that
$i\rcaw \vdash \forall u \forall y R(x_u ,y ,u , v_{x_u , y} )$.  Thus in any model of $i \rcaw$ based on
$\omega$ and the power set of $\omega$, where the basic arithmetic function symbols and the combinators
have their usual interpretations, the interpretations of the functionals $\lambda u . x_u$ and $\lambda (x_u, y) . v_{x_u , y}$
(that is, $\varphi$ and $\psi$ as in the proof of Theorem~\ref{B4}) will be computable functionals witnessing
${\sf Q} \wlt {\sf P}$.
\end{proof}

Corollary~\ref{4F} does not hold if $i\rcaw$ is replaced by $\rcaw$.  For example, suppose $\sf P$ is the trivial
problem defined by using $0=0$ for both $p_1$ and $p_2$.  Thus every set is an acceptable input for $\sf P$, and
every set is a solution of $\sf P$ for any input.  To define the problem $\sf Q$, let $T$ be an infinite computable
binary tree (all nodes labeled $0$ or $1$) with no infinite computable path.  Viewing an input $u$ as a function from
$\nat$ to $\nat$, we may interpret $u$ as a sequence of zeros and ones by identifying $u(n)$ with $0$ if
$u(n) = 0$ and identifying $u(n)$ with $1$ if $u(n)\neq 0$.  Let $q_1$ be $0=0$ so every input is acceptable for $\sf Q$.
Let $q_2 (u,v)$ say that either $v(0)=0$ and $p(n) = v(n+1)$ is an infinite path in $T$, or $v(0)>0$ and
$\langle u(0), \dots u(v(0))\rangle \notin T$.  Since $T$ is $\Delta^0_1$ definable, $q_2 (u,v)$ can be
written as a $\Pi^0_1$ formula.

Working in $\rcaw$, we will prove that $\exists \psi \forall u\, q_2(u, \psi (u))$.  By the law of the excluded middle,
either $T$ has an infinite path or it doesn't, so either $\exists p \forall n \, \langle p(0), \dots p(n) \rangle \in T$ or
$\forall p \exists n \, \langle p(0), \dots p(n) \rangle\notin T$.
In the first case, choose an infinite path $p_0$ and define $\psi$ to be the constant functional that maps each input
to the sequence $0$ followed by $p_0$.  In the second case, let $\psi$ map each $u$ to the function that
always takes the value $1+\mu m ( \langle u(0), \dots u(m) \rangle \notin T)$, so for each $u$ and $n$, $\psi (u) (n)$ is
a positive witness that $u$ is not an infinite path.  In either case, $\forall u \, q_2 (u, \psi (u))$, as desired.
Consequently, the identity functional $\varphi$ trivially witnesses
\[
\forall u (0=0 \to (0=0 \land \forall y (0=0 \to q_2 (u, \psi (u))))),
\]
so $\rcaw$ proves that ${\sf Q} \wlt {\sf P}$.

Turning to the computability theoretic framework, we will show that $\sf Q$ is not Weihrauch reducible to
to $\sf P$.  To see this, suppose by way of contradiction that $\varphi$ and $\psi$ are computable functionals
witnessing ${\sf Q} \wlt {\sf P}$.  Because $\sf P$ is trivial, $\emptyset$ (the constant $0$ function) is a solution of $\varphi (u)$,
so for all $u$, $\psi (u, \emptyset )$ is a solution of $\sf Q$.
By K\"onig's Lemma, let $u_0$ be an infinite path through $T$.
Because there is no witness that $u_0$ is not an infinite path,
we must have $\psi (u_0 , \emptyset ) (0) = 0$.  The functional $\psi$ is computable, so for some finite $k$, if $u$ is any
extension of $\langle u_0 (0) , \dots u_0 (k) \rangle$, then $\psi (u , \emptyset )(0)= 0$.
Choose a computable sequence $s_0$ such that $s_0$ extends $\langle u_0 (0) , \dots u_0 (k) \rangle$.  Then
$\psi (s_0, \emptyset)$ is a solution of $\sf Q$ and
$\psi (s_0, \emptyset ) (0) = 0$, so $v_0$ defined by $v_0 (n)= \psi(s_0 , \emptyset ) (n+1)$ is an infinite path
through $T$.  But $v_0$ is computable, contradicting the choice of $T$ and completing the example.

{
We close this section by comparing Corollary~\ref{B5} with Theorem~7.1 of Kuyper~\cite{rk}. 
The results are similar in that each states the equivalence of the existence of a formalized Weihrauch reduction
with the existence of a restricted resource proof of a related formula.}
{ Neither result implies the other, however. On one hand, Kuyper's results assume Markov's Principle in the base system, while ours do not. On the other hand,} 
{ the class of pairs of problems $\sf P$ and $\sf Q$
such that $i\rcaw$ proves $\sf Q$ with one typical use of $\sf P$ is a proper subclass of those for which
$({\sf EL}_0+\sf{MP})^{\exists \alpha a}$ proves $\sf P^\prime \to \sf Q^\prime$ (in the sense of Kuyper~\cite{rk}). This is an immediate consequence of Corollary~\ref{B5}, Theorem~7.1 of Kuyper~\cite{rk}, and the following theorem.

\begin{theorem}
Suppose $\sf P$, $\sf Q$ and $\sf R$ satisfy the hypotheses of Theorem~\ref{B4}.  If $i\rcaw \vdash {\sf Q}\wlt {\sf P}$, then there
are standard natural number indices $e_0$ and $e_1$ such that $\rca$ proves that $e_0$ and $e_1$ witness that
$\sf Q$ Weihrauch reduces to $\sf P$ as formalized in Theorem~7.1 of Kuyper~\cite{rk}.  The converse of this implication fails.
\end{theorem}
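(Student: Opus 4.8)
The plan is to prove the two assertions separately: the main implication by extracting explicit witnessing terms as in Theorem~\ref{B4} and reading off their indices, and the failure of the converse through a Markov's principle example. For the forward implication, suppose $i\rcaw \vdash {\sf Q}\wlt{\sf P}$. By Theorem~\ref{B4} we have $i\rcaw\vdash\forall u\exists x\forall y\exists v\,R(x,y,u,v)$, and exactly as in the proofs of Theorem~\ref{B4} and Corollary~\ref{4F}, via two applications of Lemma~3.9 of Hirst and Mummert~\cite{hirstmummert}, this yields terms $x_u$ and $v_{x_u,y}$ with $i\rcaw\vdash\forall u\forall y\,R(x_u,y,u,v_{x_u,y})$, so that $\varphi=\lambda u.x_u$ and $\psi=\lambda(u,y).v_{x_u,y}$ are closed terms. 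These denote primitive recursive functionals in the sense of Kohlenbach, hence total computable functionals, and since each is an explicit syntactic object an index can be read off effectively. This produces genuine (standard) natural numbers $e_0$ and $e_1$ coding $\varphi$ and $\psi$.

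It remains to check that $\rca$ proves $e_0$ and $e_1$ witness the reduction in the sense of Kuyper's Theorem~7.1~\cite{rk}. Because $\varphi$ and $\psi$ are total and $e_0,e_1$ were extracted from the defining terms, $i\rcaw$ proves that the partial functionals $\Phi_{e_0}$ and $\Phi_{e_1}$ coded by $e_0$ and $e_1$ converge and agree with the terms, that is $\Phi_{e_0}(u)=x_u$ and $\Phi_{e_1}(u,y)=v_{x_u,y}$; this is the routine verification that a closed term and its extracted index compute the same functional. Substituting these identities into $\forall u\forall y\,R(x_u,y,u,v_{x_u,y})$ turns that theorem into Kuyper's formalized statement $\sigma$ that $e_0$ and $e_1$ witness ${\sf Q}\wlt{\sf P}$, so $i\rcaw\vdash\sigma$. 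As $\sigma$ is a sentence in the language of second order arithmetic and $i\rcaw\subseteq\rcaw$, we get $\rcaw\vdash\sigma$, and by the conservativity of $\rcaw$ over $\rca$ for such sentences~\cite{kbook} we conclude $\rca\vdash\sigma$. I expect the main obstacle to lie precisely here: matching the term-based functionals of Kohlenbach's calculus with Kuyper's index-based application, and arranging the verification in a form weak enough to descend through conservativity to $\rca$.

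To show the converse fails, let ${\sf P}$ be the trivial problem with $p_1$ and $p_2$ both $0=0$, and let ${\sf Q}$ have $q_1(u)$ equal to $\neg\forall n\,(u(n)=0)$ and $q_2(u,v)$ equal to $u(v)\neq 0$. Here $q_1$ is $\exists$-free, so $R$ lies in $\Gamma_1$ and the hypotheses of Theorem~\ref{B4} hold. On Kuyper's side, Markov's principle converts $\neg\forall n\,(u(n)=0)$ into $\exists n\,(u(n)\neq 0)$, so the unbounded search $\mu v\,[u(v)\neq 0]$ is a total solution operator; taking $e_1$ to be an index for this search (ignoring its solution argument) and $e_0$ an index for any functional, $\rca$ proves classically, since once a nonzero value is known to exist the search provably halts, that $e_0$ and $e_1$ witness the reduction in the sense of Kuyper's Theorem~7.1~\cite{rk}. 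On our side, however, $i\rcaw\not\vdash{\sf Q}\wlt{\sf P}$: by Theorem~\ref{B4} this would require $i\rcaw\vdash\forall u\exists v\,(q_1(u)\to q_2(u,v))$, and since $q_1(u)$ is $\exists$-free its modified realizers carry no data, so any realizer reduces to a single total functional $\Psi$ with $u(\Psi(u))\neq 0$ whenever $q_1(u)$ holds. A standard continuity argument, starting from the constant zero function, where $\Psi$ must commit to an output $v_0$ on some finite initial segment, and then planting a nonzero value beyond both $v_0$ and that segment, produces a $u$ satisfying $q_1(u)$ with $u(\Psi(u))=0$. Hence the formula has no term realizer and, by soundness of modified realizability~\cite{kbook}, is not provable in $i\rcaw$. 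This is exactly the Markov's principle gap, so the converse implication fails.
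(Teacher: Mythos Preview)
Your forward implication follows the same route as the paper: extract closed terms witnessing the reduction via (the intuitionistic analog of) Lemma~3.9 of~\cite{hirstmummert} and read off indices from those terms. You add explicit detail on the passage from $i\rcaw$ to $\rca$ via conservativity of $\rcaw$, which the paper leaves implicit; this is a reasonable elaboration of the same idea.

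For the failure of the converse you give a genuinely different example. The paper takes $q_1(u)$ to be $0=0$ and $q_2(u,v)$ to be the disjunction $\forall n\,u(n)=0 \lor \exists n\,u(n)\neq 0$, a form of the Lesser Principle of Omniscience. Classically this disjunction is a tautology, so for \emph{any} indices $e_0,e_1$ whatsoever $\rca$ proves Kuyper's reduction statement, while $i\rcaw$ cannot prove ${\sf Q}\wlt{\sf P}$ because that would entail LPO. Your example instead isolates Markov's principle: $q_1(u)=\neg\forall n(u(n)=0)$ and $q_2(u,v)=u(v)\neq 0$, with the Kuyper-side reduction witnessed by a specific unbounded-search index. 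Both examples are correct. The paper's buys simplicity---no particular $e_1$ need be named, and non-provability in $i\rcaw$ is immediate from the familiar unprovability of LPO. Yours buys a sharper diagnosis, locating the gap between the two formalizations already at the level of Markov's principle, which is explicitly present in Kuyper's base system but absent from $i\rcaw$; your continuity argument for non-provability is sound, though it is slightly more work than the paper's one-line appeal to LPO.
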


\begin{proof}
Suppose $\sf P$, $\sf Q$ and $\sf R$ satisfy the hypotheses of Theorem~\ref{B4}.  Note that the formalization of
${\sf Q} \wlt {\sf P}$ in $i\rcaw$ is in $\Gamma_1$.  By the intuitionistic analog of Lemma 3.9 of Hirst and Mummert \cite{hirstmummert},
there are terms in the language of $i\rcaw$ corresponding to the functionals witnessing ${\sf Q} \wlt {\sf P}$.  The desired
indices can be calculated from these terms.

To prove that the converse fails, let $\sf P$ be the trivial problem $\forall x (0=0 \to \exists y ~ (0=0))$
and let $\sf Q$ be the problem 
\[
\forall u (0=0 \to \exists v( \forall n ~u(n) =0 \lor \exists n ~ u(n) \neq 0)).
\]
Note that $\sf P$, $\sf Q$, and the associated formula $\sf R$ satisfy the hypotheses of Theorem~\ref{B4}.
In $i\rcaw$, ${\sf Q} \wlt {\sf P}$ implies $\forall u ( \forall n ~u(n) =0 \lor \exists n ~ u(n) \neq 0))$.  Because
this conclusion (a form of the Lesser Principle of Omniscience) is not intuitionistically valid, $i\rcaw$ does
not prove ${\sf Q} \wlt {\sf P}$.  On the other hand, for any indices $e_0$ and $e_1$, the classical system $\rca$ proves
\[
\forall u ( 0=0 \to ( 0=0 \to (0=0 \land \forall y (0=0 \to (\forall n~u(n)=0 \lor \exists n~u(n) \neq 0))))),
\]
so for any choice of indices, $\rca$ proves that $\sf Q$ Weihrauch reduces to $\sf P$ in the sense
of Theorem~7.1 of Kuyper~\cite{rk}.
\end{proof}

In light of the preceding example, it would be nice to know if this distinction between the formalizations of
Weihrauch reducibility holds for more combinatorially interesting choices of $\sf P$ and $\sf Q$.  That is,
can we find natural choices of $\sf P$ and $\sf Q$ such that $\sf Q$ is not a theorem of $\rca$, $\rca$ proves
$\sf Q$ assuming $\sf P$, $({\sf EL}_0 + {\sf MP})^{\exists \alpha a}$ proves ${\sf P}^\prime \to {\sf Q}^\prime$, and
$i\rcaw$ cannot prove $\sf Q$ with one typical use of $\sf P$?  }

{ Unlike our results,} { the results of Kuyper~\cite{rk} are not restricted
to formulas in $\Gamma_1$, in part due to his utilization of the Kuroda negative translation.  We wonder whether
similar methods can extend the results of this paper and our previous results \cite{hirstmummert} to a wider class
of formulas.}

\section{Ramsey's theorem}\label{section3}

We can use the preceding results to address our question about proofs of Ramsey's theorem.
Let $\rt (2,4)$ denote the following formulation of Ramsey's theorem for pairs and four colors:
If $f\colon [\nat]^2 \to 4$, then there is an infinite $x \subset \nat$ and an $i<4$ such that
$f([x]^2) = i$.  The set $x$ is called {\em monochromatic}.  Similarly, $\rt(2,2)$ denotes Ramsey's theorem
for pairs and two colors.

For any $k$, we can formalize $\rt (2,k)$ as a particularly simple $\Pi^1_2$ formula.  In the higher
order axiom systems described by Kohlenbach~\cite{k2001}, all higher order objects are functions,
with subsets of $\nat$ being encoded by characteristic functions or by enumerations.
Pairs of natural numbers can be encoded by a single natural number, so any function from $\nat$ into $\nat$
(that is, any type 1 object) can be viewed as a function from $[\nat]^2$ into $\nat$.  By composition with
a truncation function $t_n$ defined by $t_n(m)=m$ if $m<n$ and $t_n(m)=0$ otherwise,
we may view any type 1 function as a map from $[\nat]^2$ into $n$.  Using these notions,
we can formalize $\rt (2,4)$ as
\[
\forall f \exists x \forall m (x(m) < x(m^\prime) \land
\forall 0<i<j<m (t_4(f(x(i),x(j)))=x(0))).
\]
Formalized in this fashion, $\rt (2,4)$ is in $\Gamma_1$ and its matrix (the portion beginning with $\forall m$)
is $\exists$-free.  If we like, we could write it as $\forall f (0=0 \to \exists x (\dots))$ to coincide with the
$\forall x (p_1 \to \exists y\,p_2 )$ problem format.  Using this formulation for ${\sf P} \text{:}\rt(2,2)$ and
${\sf Q}\text{:}\rt(2,4)$, the predicate $R$ as in the statement of Theorem~\ref{B4} is in $\Gamma_1$.

Consider the following well-known proof of $\rt (2,4)$ from two applications of $\rt (2,2)$.
Given $f\colon [\nat]^2 \to 4$, define $g_1\colon [\nat ]^2 \to 2$ by setting $g_1(n,m) = 1$ if $f(n,m)>1$ and $g_1(n,m)=0$ otherwise.
Applying $\rt(2,2)$, let $x=\{x_0 , x_1, \dots\}$ be an infinite monochromatic set for $g_1$.  Note that $f([x]^2)$ is
either contained in $\{0,1\}$ or contained in $\{2,3\}$.  Define $g_2\colon [\nat]^2 \to 2$ by $g_2(n,m) = 1$ if $f(x_n , x_m ) $ is odd
and $g_2(n,m) = 0$ otherwise.  Applying $\rt (2,2)$ a second time,
let $y$ be an infinite monochromatic set for $g_2$.  Then $z=\{x_m \mid m\in y\}$ is
an infinite monochromatic set for $f$, completing the proof of $\rt (2,4)$.  This proof that
$\rt(2,2)$ implies $\rt(2,4)$ can be carried out in $i\rcaw$.  However, our work from previous sections shows that
the second use of $\rt(2,2)$ cannot be eliminated.

\begin{theorem}\label{C1}
$i\rcaw$ cannot prove $\rt(2,4)$ with one typical use of $\rt (2,2)$.
\end{theorem}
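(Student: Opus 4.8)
The plan is to reduce Theorem~\ref{C1} to a statement about Weihrauch reducibility, and then refute that reducibility by a computability-theoretic argument.  By Corollary~\ref{B5}, since $\rt(2,2)$ and $\rt(2,4)$, formalized as in the preceding discussion, yield a predicate $R$ in $\Gamma_1$, the statement ``$i\rcaw$ proves $\rt(2,4)$ with one typical use of $\rt(2,2)$'' is equivalent to $i\rcaw \vdash \rt(2,4) \wlt \rt(2,2)$.  By Corollary~\ref{4F}, this in turn implies the honest (unformalized) Weihrauch reduction $\rt(2,4) \wlt \rt(2,2)$.  So it suffices to show that $\rt(2,4)$ is \emph{not} Weihrauch reducible to $\rt(2,2)$ in the classical computability-theoretic sense.

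First I would recall (or cite from~\cite{doraisetal}) the standard fact that $\rt(2,4) \not\wlt \rt(2,2)$.  The heart of this is a counting-of-colors obstruction: a single application of $\rt(2,2)$ takes a $2$-coloring and returns a set monochromatic for it, and from one such homogeneous set one cannot uniformly and computably extract a set homogeneous for a given $4$-coloring.  Concretely, suppose $\varphi$ and $\psi$ witnessed $\rt(2,4)\wlt\rt(2,2)$.  Given an input $4$-coloring $f$, the reduction must commit (via $\varphi$) to a single $2$-coloring $g=\varphi(f)$, and then for \emph{every} infinite $g$-homogeneous set $Y$, $\psi(f,Y)$ must be $f$-homogeneous.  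One then builds, by a diagonalization or forcing-style construction, a coloring $f$ and two infinite $g$-homogeneous sets $Y_0, Y_1$ (for the same $g=\varphi(f)$, obtainable because an infinite homogeneous set for a $2$-coloring always exists and there is flexibility in which one is produced) whose $\psi$-images are forced to be homogeneous for incompatible colors of $f$ — contradicting that $\psi(f,Y)$ depends continuously on finitely much of $Y$.  This is the same flavor of argument as the example following Corollary~\ref{4F}: exploit the continuity/use of the computable functional $\psi$ against a suitably chosen instance.

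The cleanest route is probably to invoke the known nonreducibility result directly rather than reproving it, so the main work in the write-up is the \emph{translation} layer: verifying that the formalization of $\rt(2,k)$ given above genuinely has the $\Gamma_1$ form and $\exists$-free matrix required to apply Corollary~\ref{B5} and Corollary~\ref{4F}, and that ``one typical use'' in Definition~\ref{B1} matches the informal ``eliminate the second application.''  The discussion immediately preceding the theorem already does most of this, so I would just assemble: (1) cite the $\Gamma_1$ observation; (2) apply Corollary~\ref{B5} to get the equivalence with $\rcaw$-provability of formal reducibility, wait—with $i\rcaw$-provability of formal reducibility; (3) apply Corollary~\ref{4F} to descend to honest reducibility; (4) cite $\rt(2,4)\not\wlt\rt(2,2)$; (5) chain the contrapositives.

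The main obstacle I anticipate is step (4) if it is not already available as a black box: the separation $\rt(2,4)\not\wlt\rt(2,2)$, while folklore, requires a genuine construction, and one must be careful that it is the \emph{strong} (or at least the relevant identity-computable-reduction) form of Weihrauch reducibility matching the definition in \S\ref{section1}, where $\varphi$ and $\psi$ are total computable functionals and $\psi$ is allowed to see the original instance $u$.  If the reference~\cite{doraisetal} already records $\rt(2,4)\not\wlt\rt(2,2)$ in exactly this framework, the theorem is essentially immediate; if not, the diagonalization sketched above must be carried out with attention to the use of $\psi$ and the freedom in choosing the homogeneous set returned by the oracle for $\rt(2,2)$.
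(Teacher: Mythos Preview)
Your proposal is correct and matches the paper's proof essentially step for step: the paper verifies the $\Gamma_1$ hypothesis, cites Corollary~3.4 of~\cite{doraisetal} for $\rt(2,4)\not\wlt\rt(2,2)$, and then applies Corollary~\ref{4F} and Corollary~\ref{B5} in contrapositive form. Your anticipated obstacle at step~(4) does not arise, since the nonreducibility is indeed available as a black box in exactly the required form.
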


\begin{proof}
As noted in the second paragraph of this section, for our formulation of $\rt(2,2)$ and $\rt(2,4)$,
$\sf P$, $\sf Q$, and $R$ satisfy the hypotheses of Theorem~\ref{B4}.
By Corollary~3.4 of Dorais {\it et al.}~\cite{doraisetal}, $\rt(2,4) \not\wlt \rt(2,2)$.  By Corollary~\ref{4F},
$i\rcaw \not\vdash \rt(2,4) \wlt \rt(2,2)$.  
By Corollary~\ref{B5}, $i\rcaw$ does not prove $\rt(2,4)$ with one
typical use of $\rt(2,2)$.
\end{proof}

Theorem~3.3 of Hirschfeldt and Jockusch~\cite{hj} asserts that if $j,k,n \in \omega$ satisfy the inequalities $n\ge 1$ and $k>j\ge 2$,
then $\rt(n,k) \not\wlt \rt(n,j)$.  They note that this result was also proved independently by Patey and by Brattka and Rakotoniaina.
Substituting this result for the use of Corollary~3.4 in the proof of Theorem~\ref{C1} yields the following extension.

\begin{corollary}\label{C1coro}
If $j,k,n\in \omega$ satisfy $n \ge 1$ and $k>j\ge 2$, then $i\rcaw$ cannot prove $\rt(n,k)$ with one typical use of $\rt(n,j)$.
\end{corollary}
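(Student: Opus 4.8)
The plan is to follow the proof of Theorem~\ref{C1} essentially verbatim, replacing the appeal to Corollary~3.4 of Dorais \emph{et al.}~\cite{doraisetal} by the more general Theorem~3.3 of Hirschfeldt and Jockusch~\cite{hj}. The only point requiring genuine (if routine) work is to verify that our formulations of $\rt(n,k)$ and $\rt(n,j)$ meet the hypotheses of Theorem~\ref{B4}: that each can be written as a $\forall x (p_1 \to \exists y\, p_2)$ problem with $p_2$ $\exists$-free, and that the associated predicate $R$ lies in $\Gamma_1$. For $n = 2$ this was carried out in the second paragraph of this section. For arbitrary $n \ge 1$ the generalization is immediate: since $[\nat]^n$ embeds into $\nat$ via a primitive recursive pairing function, any type~1 object $f$ may be regarded as a coloring $[\nat]^n \to \nat$, and composing with the truncation $t_k$ yields a coloring $[\nat]^n \to k$. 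Encoding an infinite homogeneous set as an increasing enumeration $x$ with $x(0)$ recording the color, $\rt(n,k)$ becomes
\[
\forall f \exists x \forall m \left( x(m) < x(m^\prime) \land \forall 0 < i_1 < \cdots < i_n < m \; (t_k(f(x(i_1), \ldots, x(i_n))) = x(0)) \right),
\]
whose matrix is built from prime formulas using $\land$ and (after expanding the bounded quantifier block in the usual way) ordinary universal quantification, hence is $\exists$-free. As in the case $n = 2$, one then checks that for ${\sf P}\text{:}\rt(n,j)$ and ${\sf Q}\text{:}\rt(n,k)$ the predicate $R$ from Theorem~\ref{B4} is in $\Gamma_1$.

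With this formalization in place, the argument closes quickly. By Theorem~3.3 of Hirschfeldt and Jockusch~\cite{hj}, the hypotheses $n \ge 1$ and $k > j \ge 2$ give $\rt(n,k) \not\wlt \rt(n,j)$. By Corollary~\ref{4F}, this forces $i\rcaw \not\vdash \rt(n,k) \wlt \rt(n,j)$, since a proof of the formalized reduction in $i\rcaw$ would produce an actual Weihrauch reduction. Finally, Corollary~\ref{B5} translates this non-provability into the assertion that $i\rcaw$ does not prove $\rt(n,k)$ with one typical use of $\rt(n,j)$.

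The expected obstacle is purely bookkeeping: confirming that expanding the bounded quantifier $\forall 0 < i_1 < \cdots < i_n < m$ in the standard fashion leaves the matrix $\exists$-free, and that the primitive recursive encoding of $n$-tuples (being represented by a closed term of $i\rcaw$) introduces no existential quantifier that would push $R$ outside $\Gamma_1$. No ideas beyond those already used for Theorem~\ref{C1} are required.
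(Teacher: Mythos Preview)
Your proposal is correct and follows exactly the route the paper indicates: substitute Theorem~3.3 of Hirschfeldt and Jockusch~\cite{hj} for Corollary~3.4 of Dorais \emph{et~al.} in the proof of Theorem~\ref{C1}, then apply Corollary~\ref{4F} and Corollary~\ref{B5} as before. If anything, you are more explicit than the paper about extending the $\exists$-free formalization of $\rt(2,k)$ to arbitrary exponent~$n$; the paper simply asserts that the substitution yields the corollary without spelling this out.
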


Returning to our original discussion of Ramsey's theorem pairs, we next show that $\rt(2,4)$ can be proved with
one typical use of $\rt(2,2)$ in systems such as $\rca$ that include the law of the excluded middle. This somewhat counterintuitive result relies on the following definition. 

\begin{defn}\label{C2}
$(\rca)$  Suppose $f\colon [\nat]^2 \to 4$.  A set $x$ is {\em $2$-mono} for $f$ if there is a set $\{i,j\} \subset \{0,1,2,3\}$ such that
$f([x]^2) \subset \{i,j\}$.
\end{defn}

\begin{theorem}\label{C3}
$\rca$ can prove $\rt(2,4)$ with one typical use of $\rt(2,2)$.
\end{theorem}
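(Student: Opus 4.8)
The governing observation is that, once the law of the excluded middle is available, proving ${\sf Q}$ with one typical use of ${\sf P}$ no longer entails the existence of an actual Weihrauch reduction. The forward half of Corollary~\ref{B5} runs through Theorem~\ref{B4}, and hence through modified realizability, so it is genuinely intuitionistic; only the converse half, which is purely deductive, survives in a classical base theory. The plan is therefore to establish the formalized statement $\rcaw\vdash\rt(2,4)\wlt\rt(2,2)$ and then feed it into that converse, reading Definition~\ref{B1} as in the converse of Corollary~\ref{B5}: a functional whose existence is proved supplies the required terms $x_u$ and $v_{x_u,y_0}$. For the single application, I would colour by the coarsening $g_1$ with $g_1(n,m)=1$ when $f(n,m)>1$ and $g_1(n,m)=0$ otherwise, so that a homogeneous set for $g_1$ is precisely a $2$-mono set for $f$ in the sense of Definition~\ref{C2}, the two colour pairs $\{0,1\}$ and $\{2,3\}$ being recorded by the homogeneous value.

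The point of the excluded middle is to remove, non-constructively, the step that fails computably --- refining a $2$-mono set to a monochromatic one without a second call to $\rt(2,2)$. I would split on $\rt(2,2)\lor\neg\rt(2,2)$. On the branch $\neg\rt(2,2)$ there is a colouring with no infinite homogeneous set; letting $\varphi$ be the constant functional returning that colouring makes every hypothesis $p_2(\varphi(f),y)$ false, so the inner implication holds vacuously and no refinement is required. On the branch $\rt(2,2)$ the familiar two-colour argument recalled in Section~\ref{section3} is available inside $\rcaw$: from the $2$-mono set and a homogeneous set for the secondary colouring $g_2$ one forms $z=\{x_m\mid m\in y\}$, an infinite monochromatic set for $f$, and packages the passage $(f,y)\mapsto z$ as $\psi$ while taking $\varphi(f)=g_1$. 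Combining the branches proves $\exists\varphi\exists\psi$, hence the formalized reduction; by Lemma~\ref{B2} the associated formula $\forall u\exists x\forall y\exists v\,R(x,y,u,v)$ also follows, even though by Corollary~\ref{4F} and the failure of $\rt(2,4)\wlt\rt(2,2)$ used in Theorem~\ref{C1} no genuine reduction exists.

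The hard part is the branch on which $\rt(2,2)$ holds, where the implication is not vacuous and the monochromatic refinement must really be produced. Turning the hypothesised homogeneous set for $g_2$ into the value $\psi(f,y)$ is non-uniform in $f$, so it can be carried out only with the classical case analysis in hand and not by any computable functional --- which is exactly the gap between this theorem and the intuitionistic Theorem~\ref{C1}, and I expect the selection of that homogeneous set (and the choice principle it tacitly invokes) to be the step requiring the most care. I would also check that the two branches of the split are merged without generalizing on $u$ or on the variables free in $x_u$ and $v_{x_u,y_0}$, so that the deduction-theorem steps behind Lemma~\ref{B2} remain valid, and that the converse of Corollary~\ref{B5} legitimately converts the provably existent $\varphi$ and $\psi$ into the single instance term $x_u$ and solution term $v_{x_u,y_0}$ demanded by Definition~\ref{B1}.
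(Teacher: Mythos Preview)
Your plan has a genuine gap on the branch where $\rt(2,2)$ holds. There you take $\varphi(f)=g_1$, and given a homogeneous $y$ for $g_1$ you want $\psi(f,y)$ to be an infinite monochromatic set for $f$. To produce it you appeal to $\rt(2,2)$ \emph{again}, applied to the secondary colouring $g_2$ built from $f$ and $y$. But $\rt(2,2)$ only asserts that such a homogeneous set \emph{exists}; it does not hand you a functional $(f,y)\mapsto z$. Packaging this into $\psi$ requires a choice principle of type $1\to 1$, which $\rcaw$ does not supply ($\qfac$ only selects type~$0$ witnesses). You flag this yourself as ``the step requiring the most care,'' but it is not a detail to be checked --- it is the missing idea. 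Relatedly, the converse of Corollary~\ref{B5} uses $\varphi(u)$ and $\psi(u,y_0)$ as the terms $x_u$ and $v_{x_u,y_0}$; in $i\rcaw$ those are genuine closed terms extracted by realizability, but a classical existence proof of $\varphi,\psi$ in $\rcaw$ does not yield terms, so the transfer you are relying on is not automatic.

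The paper avoids all of this by working directly in $\rca$ and choosing a different instance of excluded middle: it splits not on $\rt(2,2)\lor\neg\rt(2,2)$ but on whether \emph{this particular} $f$ has an infinite $2$-mono set. On the ``yes'' branch one already possesses a $2$-mono set $x$ with $f([x]^2)\subset\{a_0,a_1\}$, so the single typical use of $\rt(2,2)$, applied to the obvious $2$-colouring of $x$, immediately yields a monochromatic set for $f$ --- no second application, no choice. On the ``no'' branch one colours $\nat$ by whether $f\le 1$ or $f\ge 2$; the single use of $\rt(2,2)$ then produces a $2$-mono set, contradicting the branch hypothesis. The instance-specific split is what lets the lone application of $\rt(2,2)$ do the refinement work that your scheme tries to push into $\psi$ via a second (hidden) use of $\rt(2,2)$.
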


\begin{proof}
The following proof can be can be carried out in $\rca$.

Suppose $f\colon [\nat]^2 \to 4$.  Either there is an infinite $x\subset \nat$ that is $2$-mono for $f$
or there is no such set.  If there is such a set, define $j=0$, let $x$ be an increasing enumeration of such a set,
and suppose $f([x]^2) \subset \{a_0 , a_1\}$.  If there is no such set, define $j=1$ and let $x$ be an increasing
enumeration of~$\nat$.  Define $g\colon [\nat]^2 \to 2$ by the following:
\[
g(m,n) =
\begin{cases}
0 &\text{if~}j=0\text{~and~}f(x(m),x(n))= a_0\\
1 &\text{if~}j=0\text{~and~}f(x(m),x(n))= a_1\\
0 &\text{if~}j=1\text{~and~}f(x(m),x(n))\le 1\\
1 &\text{if~}j=1\text{~and~}f(x(m),x(n))\ge 2.
\end{cases}
\]
By one typical application of $\rt(2,2)$, let $y$ be an infinite monochromatic set for $g$.
If $j = 1$, then $y$ is an infinite $2$-mono set for $f$, contradicting the definition of $j$.
Thus $j=0$, and the set $z=\{x(m) \mid m\in y\}$ is an infinite monochromatic set for $f$.
\end{proof}

Using similar but more complicated constructions, for each standard integer $k$ one can show that
$\rt(2,k)$ can be proved with a single application of $\rt(2,2)$ in the classical system $\rca$.  For
example, given $f\colon [\nat]^2 \to 8$ either $\nat$ contains no infinite $4$-mono set, or there is an infinite
$4$-mono set with no infinite $2$-mono subset, or there is an infinite $2$-mono set.  Define
$g$ based on these possibilities and proceed as above.  Furthermore, examination of the proof of
Theorem~\ref{C3} reveals no actual use of the exponent.  Consequently, we can extend Theorem~\ref{C3}
as follows.

\begin{corollary}\label{C4}
Let $n$ and $k$ be positive elements of $\omega$.  $\rca$ can prove $\rt (n,k)$ with one typical use of $\rt (n,2)$.
\end{corollary}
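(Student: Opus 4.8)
The plan is to generalize the proof of Theorem~\ref{C3}, replacing the single threshold ``$2$-mono versus not $2$-mono'' by a whole descending chain of palette sizes and using the law of the excluded middle to locate, in advance, the finest mono-level that $f$ actually achieves. First I would extend Definition~\ref{C2}: for $f\colon [\nat]^n\to k$ and $m\le k$, call a set $x$ \emph{$m$-mono} for $f$ if $f([x]^n)\subseteq S$ for some $S\subseteq\{0,\dots,k-1\}$ with $|S|\le m$. The point of one application of $\rt(n,2)$ is exactly that it implements a single halving step: if $x$ is $m$-mono with palette $S$ and we split $S=S_0\sqcup S_1$ with $|S_0|=\lceil m/2\rceil$, then any set monochromatic for the induced $2$-coloring is $\lceil m/2\rceil$-mono. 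Note that nothing here refers to the exponent; the $2$-coloring is simply read off from the values of $f$ on $n$-tuples, which is the content of the remark that the proof of Theorem~\ref{C3} makes no use of the exponent, and it makes the argument below uniform in $n$.

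The steps, in order, would be as follows (the cases $k\le 2$ being immediate). Fix the halving chain $k=m_0>m_1>\dots>m_{r-1}=2>m_r=1$ with $m_{i+1}=\lceil m_i/2\rceil$, where $r=\lceil\log_2 k\rceil$ is a standard integer since $k$ is. Using the law of the excluded middle once for each of the standardly many indices $i=r-1,\dots,0$, determine the largest $i^\ast\le r-1$ such that there is an infinite $m_{i^\ast}$-mono set; this is well defined because every infinite set is $k=m_0$-mono. Fix an increasing enumeration $w$ of such a set together with a palette $S$ satisfying $f([w]^n)\subseteq S$ and $|S|=m_{i^\ast}$ (take $w$ enumerating $\nat$ and $S=\{0,\dots,k-1\}$ when $i^\ast=0$), and split $S=S_0\sqcup S_1$ with $|S_0|=m_{i^\ast+1}$. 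Now define a \emph{single} coloring $g\colon[\nat]^n\to 2$ by a case definition exactly as in Theorem~\ref{C3}: $g(\vec a)=0$ if $f(w(a_1),\dots,w(a_n))\in S_0$ and $g(\vec a)=1$ otherwise. The one typical application of $\rt(n,2)$ is applied to this $g$, yielding an infinite monochromatic $y$; set $z=\{w(m)\mid m\in y\}$, an infinite $m_{i^\ast+1}$-mono set confined to $S_0$ or to $S_1$.

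The case analysis on $i^\ast$ then reappears only in reading off the conclusion. If $i^\ast=r-1$, then $m_{i^\ast+1}=1$, so $z$ is monochromatic for $f$ and is the desired solution. If $i^\ast<r-1$, then $z$ witnesses an infinite $m_{i^\ast+1}$-mono set, contradicting the maximality of $i^\ast$, and from this contradiction the existence of a monochromatic set follows. In either case there is exactly one invocation of $\rt(n,2)$, applied to one coloring $g$, so the deduction fits the template of Definition~\ref{B1} just as the proof of Theorem~\ref{C3} does.

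I expect the main obstacle to be bookkeeping rather than mathematics: verifying that the entire iterated case split is folded into the construction of the \emph{single} coloring $g$, with the parameters $i^\ast$, $w$, $S$, $S_0$, $S_1$ all fixed by the law of the excluded middle before $g$ is defined, so that only one application of $\rt(n,2)$ occurs and the generalization restrictions of Definition~\ref{B1} are respected. Two points deserve care: that $r$ is a standard integer, so the case analysis is a deduction of standard finite length carried out inside $\rca$ using only the excluded middle (no comprehension beyond $\rca$ is needed to \emph{decide} the relevant $\Sigma^1_1$ existence statements, since we only case-split on them and instantiate the witness in the affirmative branch); and that the confinement argument producing $z$ from $y$ never inspects $n$, which is precisely what licenses stating the result for all positive $n$ rather than only $n=2$.
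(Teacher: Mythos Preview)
Your proposal is correct and takes essentially the same approach as the paper's sketch: both use the law of the excluded middle on a halving chain of palette sizes to locate in advance the finest mono-level that $f$ achieves, then apply $\rt(n,2)$ once either to produce a monochromatic set (when the finest level is $2$-mono) or to derive a contradiction with the chosen level. Your write-up fills in substantially more detail than the paper's one-paragraph indication (which merely illustrates the idea for $k=8$ and remarks that the exponent is irrelevant), but the underlying strategy is identical.
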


This kind of nonconstructive argument leveraging a single typical use of an axiom is not limited to Ramsey's theorem. For example, for each $n \in \omega$, $\rca$ can prove ``every set has an $n$th Turing jump'' with a single typical use of ``every set has a Turing jump''. Many more examples come to mind, where a single typical use can be used to iterate a principle any finite number of times.

The relationship between Weihrauch reducibility and proofs in intuitionistic
systems played an important role in obtaining the results of this section.
We did not discover the
proof described in Theorem~\ref{C3} until our work on Corollary~\ref{B5}
indicated
the significance
of the law of the excluded middle in this setting.

\bibliographystyle{amsplain}

\begin{bibsection}[Bibliography]
\begin{biblist}[\normalsize]

\bib{dagstuhl}{article}{
  author =	{Brattka,Vasco},
  author = {Kawamura, Akitoshi},
  author = {Marcone, Alberto},
  author = {Pauly, Arno},
  title =	{{Measuring the Complexity of Computational Content (Dagstuhl Seminar 15392)}},
  pages =	{77--104},
  journal =	{Dagstuhl Reports},
  ISSN =	{2192-5283},
  year =	{2016},
  volume =	{5},
  number =	{9},
  editor =	{Vasco Brattka and Akitoshi Kawamura and Alberto Marcone and Arno Pauly},
  publisher =	{Schloss Dagstuhl--Leibniz-Zentrum fuer Informatik},
  address =	{Dagstuhl, Germany},
  URL =		{http://drops.dagstuhl.de/opus/volltexte/2016/5686},
  doi =		{http://dx.doi.org/10.4230/DagRep.5.9.77},
}

\bib{bg2011b}{article}{
   author={Brattka, Vasco},
   author={Gherardi, Guido},
  TITLE =        {Effective choice and boundedness principles in computable analysis},
  JOURNAL =      {Bulletin of Symbolic Logic},
  YEAR =         {2011},
  volume =       {1},
  number =        {1},
  pages =        {73--117},
  doi =          {10.2178/bsl/1294186663},
}

\bib{dorais}{article}{
   author={Dorais, Fran{\c{c}}ois G.},
   title={Classical consequences of continuous choice principles from
   intuitionistic analysis},
   journal={Notre Dame J. Form. Log.},
   volume={55},
   date={2014},
   number={1},
   pages={25--39},
   issn={0029-4527},
   review={\MR{3161410}},
   doi={10.1215/00294527-2377860},
}
   
\bib{doraisetal}{article}{
  author={Dorais, Fran{\c c}ois G.},
  author={Dzhafarov, Damir D.},
  author={Hirst, Jeffry L.},
  author={ Mileti, Joseph R.},
  author={Shafer, Paul},
  title={On uniform relationships between combinatorial problems},
  journal={Trans.\ AMS},
  year={2014},
  volume={368},
  pages={1321--1359},
  doi={10.1090/tran/6465},
}

\bib{hj}{article}{
   author={Hirschfeldt, Denis R.},
   author={Jockusch, Carl G.},
   title={On notions of computability-theoretic reduction between $\Pi_2^1$ principles},
   journal={J. Math. Log.},
   volume={16},
   date={2016},
   number={1},
   pages={1650002, 59},
   issn={0219-0613},
   review={\MR{3518779}},
   doi={10.1142/S0219061316500021},
}

\bib{hirstmummert}{article}{
   author={Hirst, Jeffry L.},
   author={Mummert, Carl},
   title={Reverse mathematics and uniformity in proofs without excluded
   middle},
   journal={Notre Dame J. Form. Log.},
   volume={52},
   date={2011},
   number={2},
   pages={149--162},
   issn={0029-4527},
   review={\MR{2794648}},
   doi={10.1215/00294527-1306163},
}

\bib{kleene}{book}{
   author={Kleene, Stephen Cole},
   title={Introduction to metamathematics},
   publisher={D. Van Nostrand Co., Inc., New York, N. Y.},
   date={1952},
   pages={x+550},
   review={\MR{0051790}},
}

\bib{kbook}{book}{
   author={Kohlenbach, Ulrich},
   title={Applied proof theory: proof interpretations and their use in
   mathematics},
   series={Springer Monographs in Mathematics},
   publisher={Springer-Verlag, Berlin},
   date={2008},
   pages={xx+532},
   isbn={978-3-540-77532-4},
   review={\MR{2445721}},
}

\bib{k2001}{article}{
   author={Kohlenbach, Ulrich},
   title={Higher order reverse mathematics},
   conference={
      title={Reverse mathematics 2001},
   },
   book={
      series={Lect. Notes Log.},
      volume={21},
      publisher={Assoc. Symbol. Logic, La Jolla, CA},
   },
   date={2005},
   pages={281--295},
   review={\MR{2185441}},
}

\bib{rk}{article}{
author={Kuyper, Rutger},
title={On Weihrauch reducibility and intuitionistic reverse mathematics},
note={Preprint, arXiv:1511.05189v1},
}

\bib{simpson}{book}{
   author={Simpson, Stephen G.},
   title={Subsystems of second order arithmetic},
   series={Perspectives in Logic},
   edition={2},
   publisher={Cambridge University Press, Cambridge; Association for
   Symbolic Logic, Poughkeepsie, NY},
   date={2009},
   pages={xvi+444},
   isbn={978-0-521-88439-6},
   review={\MR{2517689 (2010e:03073)}},
   doi={10.1017/CBO9780511581007},
}

\bib{troelstra}{book}{
author={Troelstra, A.~S.},
title={Metamathematical investigation of intuitionistic arithmetic and analysis},
series={Lect. Notes in Math.},
volume ={344},
publisher={Springer, Berlin},
date={1973},
pages={xvii+485},
isbn={0-387-06491-5},
}

\end{biblist}
\end{bibsection}

\newpage

\centerline{\Large Corrigendum: Using Ramsey’s theorem once}
\vskip .2in

\centerline{\large Jeffry L.~Hirst \qquad Carl Mummert}
\vskip .2in

\centerline{\large June 23, 2020}
\vskip .5in

In \cite{rt1}, the first definition should be as follows:

\setcounter{theorem}{0}
\begin{defn}
Suppose $\gth$ is a theory
extending intuitionistic predicate calculus
and $\probP$ and $\probQu$ are problems.
We say $\gth$ {\sl proves} $\sf Q$ {\sl with one typical use of} $\sf P$ if the following two sentences hold:
\begin{enumlist}
\item \label{B1A} For a variable $u$ there is a term $x_u$ such that using only axioms of $\gth$ and the assumption $q_1 (u)$,
and
holding the free variables of $q_1(u)$ constant,
there is a deduction
of $p_1(x_u)$.
\item \label{B1B}  For a previously unused variable $y$, there is a term $v_{x_u, y}$ such that using only axioms of $\gth$,
lines from the proof in sentence \ref{B1A}, and the assumptions $q_1(u)$ and $p_2(x_u , y)$,
while
holding the free variables of  $q_1(u)$ and $p_2(x_u , y)$ constant,
there is a deduction of $q_2 (u, v_{x_u , y} )$.
\end{enumlist}
\end{defn}

The revised definition applies to theories extending intuitionistic predicate calculus, matching the
formulation of Lemma 1, which immediately follows the definition in the article.  The restrictions on holding
variables constant are exactly those needed for the applications of the deduction theorem in the proof of
Lemma 1.  Essentially, Definition 1 divides a proof into two parts, before and after a single application
of $\sf P$.  The second portion of the proof may make use of the lines from the first portion
as noted in the second sentence of the revised definition.  This modification is useful in the proof
of Theorem 4, the last theorem of the article.

\begin{bibsection}[Bibliography]
\begin{biblist}

\bib{rt1}{article}{
   author={Hirst, Jeffry L.},
   author={Mummert, Carl},
   title={Using Ramsey's theorem once},
   journal={Arch. Math. Logic},
   volume={58},
   date={2019},
   number={7-8},
   pages={857--866},
   issn={0933-5846},
   review={\MR{4003638}},
   doi={10.1007/s00153-019-00664-z},
}

\end{biblist}
\end{bibsection}

\end{document}